\documentclass[preprint, 12 pt]{elsarticle}
\usepackage[latin1]{inputenc}
\usepackage{amsmath}
\usepackage{amsthm}
\usepackage{amssymb}
\usepackage{graphicx}
\usepackage{color}
\usepackage{graphicx}
\usepackage{multirow}
\usepackage{mathrsfs}
\usepackage{xcolor}
\usepackage{pagecolor}
\theoremstyle{plain}
\newtheorem{thm}{Theorem}[section]

\newtheorem{conj}[thm]{Conjecture}

\newtheorem{defn}[thm]{Definition}


\textwidth = 15.50cm
\textheight = 22.00cm
\oddsidemargin = 0.12in
\evensidemargin = 0.12in
\setlength{\parindent}{0pt}
\setlength{\parskip}{5pt plus 2pt minus 1pt}

\numberwithin{equation}{section}
\makeatletter
\renewcommand{\subsection}{\@startsection
{subsection}{2}{0mm}{\baselineskip}{-0.25cm}
{\normalfont\normalsize\bf}}
\makeatother

\newcommand{\RNum}[1]{\uppercase\expandafter{\romannumeral #1\relax}}


\journal{---}
\begin{document}
\begin{frontmatter}
\title{A construction for a counterexample to \\ the pseudo $2$-factor isomorphic graph conjecture}

\author{Mari\'en Abreu}	
\ead{marien.abreu@unibas.it}
\author{Martin Funk}
\ead{martin.funk@unibas.it}
\author{Domenico Labbate}
\ead{domenico.labbate@unibas.it}
\author{Federico Romaniello}
\ead{federico.romaniello@unibas.it}

\address{Universit\`{a} degli Studi della Basilicata, Viale dell'Ateneo Lucano 10, 85100 Potenza, Italy.}

\begin{abstract}

A graph $G$ admiting a $2$-factor is \textit{pseudo $2$-factor isomorphic} if the parity of the number of cycles in all its $2$-factors is the same.
In \cite{ADJLS} some of the authors of this note gave a partial characterisation of pseudo $2$-factor isomorphic bipartite cubic graphs and conjectured that
$K_{3,3}$, the Heawood graph and the Pappus graph are the only essentially $4$-edge-connected ones. In \cite{JG} Jan Goedgebeur computationally found a graph $\mathscr{G}$ on $30$ vertices which is pseudo $2$-factor isomorphic cubic and bipartite, essentially $4$-edge-connected and cyclically $6$-edge-connected, thus refuting the above conjecture. In this note, we describe how such a graph can be constructed from the Heawood graph and the generalised Petersen graph $GP(8,3)$, which are the Levi graphs of the Fano $7_3$ configuration and the M\"obius-Kantor $8_3$ configuration, respectively. Such a description of $\mathscr{G}$ allows us to understand its automorphism group, which has order $144$, using both a geometrical and a graph theoretical approach simultaneously. Moreover we illustrate the uniqueness of this graph.


\end{abstract}
\begin{keyword}
$2$-factor, cubic, bipartite, configurations, automorphisms
\MSC[2010] 05B30, 05C25, 05C38, 05C75	
\end{keyword}

\end{frontmatter}
\section{Introduction}

All graphs considered in this note are simple (without loops or multiple edges) and undirected. Most of our terminologies are standard; for further definitions and notation not explicitly stated in the paper, please refer to \cite{BM} and \cite{Wie}.


A $2$-factor of a graph $G$ is a $2$-regular spanning subgraph of $G$. A graph $G$ admiting a $2$-factor is \textit{pseudo $2$-factor isomorphic} if the parity of the number of cycles in all its $2$-factors is the same. In a cubic graph, the three edges incident with a vertex constitute a $3$-edge-cut because their removal leaves an isolated vertex, and is called \emph{trivial}, others being \emph{non-trivial}.
A cubic graph is said to be \emph{essentially $4$-edge-connected} if it contains no non-trivial $3$-edge-cut.
A set $S$ of edges of a graph $G$ is a \emph{cyclic edge cut} if $G-S$ has two components each of which contains
a cycle. We say that a graph $G$ is \emph{cyclically $m$-edge-connected} if each cyclic edge cut of $G$ has size at least
$m$. In \cite{ADJLS} some of the authors of this note gave a partial characterization of pseudo $2$-factor isomorphic bipartite cubic graphs and conjectured:

\begin{conj}\cite[Conjecture $3.6$]{ADJLS}\label{cong}
Let $G$ be an essentially $4$-edge-connected pseudo $2$-factor isomorphic cubic bipartite graph. Then $G$ must be $K_{3,3}$, the Heawood graph or the Pappus graph.
\end{conj}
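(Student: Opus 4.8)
As the abstract records and as \cite{JG} shows computationally, Conjecture~\ref{cong} is in fact false; the task is therefore to \emph{refute} it, and the plan is to give a conceptual construction of the $30$-vertex counterexample $\mathscr{G}$ and to verify its properties. I would build $\mathscr{G}$ as an amalgam of the Heawood graph $H$ -- the Levi graph of the Fano $7_3$ configuration, on $14$ vertices -- and the M\"obius--Kantor graph $GP(8,3)$ -- the Levi graph of the $8_3$ configuration, on $16$ vertices: delete a suitable triple of edges from each of $H$ and $GP(8,3)$, producing six ``loose ends'' on either side, and reconnect the twelve half-edges by six new edges running between $V(H)$ and $V(GP(8,3))$, chosen so as to match up the two colour classes. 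Then $\mathscr{G}$ has $14+16 = 30$ vertices and $(21-3)+(24-3)+6 = 45 = \tfrac{3}{2}\cdot 30$ edges, is cubic by construction, and is bipartite since $H$ and $GP(8,3)$ are bipartite and the interface respects their bipartitions. The precise deleted triples and the wiring of the six-edge interface must be pinned down; the correct choice is forced simultaneously by the geometry of the two configurations and by the $2$-factor bookkeeping below, and its symmetry is what will account for $|\mathrm{Aut}(\mathscr{G})| = 144$.

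For the connectivity claims: any edge cut of $\mathscr{G}$ decomposes, up to the local effect of the deletions, into an edge cut of $H$ minus three edges, an edge cut of $GP(8,3)$ minus three edges, and a subset of the six interface edges; since $H$ and $GP(8,3)$ are each cyclically $6$-edge-connected of girth $6$, a short case analysis shows that $\mathscr{G}$ has no cyclic edge cut of size less than $6$, so $\mathscr{G}$ is cyclically $6$-edge-connected. Moreover, in a bipartite cubic graph of girth $6$ any non-trivial $3$-edge-cut would split off a part on at least $7$ vertices and hence be a cyclic $3$-edge-cut; therefore $\mathscr{G}$ is automatically essentially $4$-edge-connected. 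This bookkeeping is routine once the amalgam is fixed.

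The heart of the argument -- and the step I expect to be the main obstacle -- is that $\mathscr{G}$ is pseudo $2$-factor isomorphic. Let $F$ be a $2$-factor of $\mathscr{G}$. As an even subgraph, $F$ meets the six-edge interface in an even number of edges; removing these from $F$ leaves, on each of the two sides, a disjoint union of cycles and paths whose path-ends are exactly the interface-vertices used by $F$, and reinserting appropriate deleted edges turns each side into a $2$-factor of $H$, resp.\ of $GP(8,3)$, constrained on its three distinguished edges, while the interface edges splice the open paths of the two sides together into cycles according to a cyclic pattern on at most six symbols. Hence $c(F)\bmod 2$ is an explicit function of the cycle-parities of the two side-$2$-factors and of the interface pattern. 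Now $H$ is $2$-factor hamiltonian -- every $2$-factor of the Heawood graph is a single Hamilton cycle -- so the $H$-side contributes a fixed parity, and the whole problem reduces to one purely about $GP(8,3)$: that the parity of the number of cycles of a $2$-factor of $GP(8,3)$ is determined by which of its three distinguished edges it uses. Because $GP(8,3)$ is bipartite on $16$ vertices, each of its $2$-factors is either a single $16$-cycle or a pair of even cycles (of lengths $8+8$ or $6+10$), so this last statement is a finite verification; the real work is to organise it using the large symmetry group of $GP(8,3)$ so that only a few edge-patterns survive, and to choose the deleted triples and the interface precisely so that every surviving case gives the same parity.

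Finally, with $\mathscr{G}$ in hand I would compute $\mathrm{Aut}(\mathscr{G})$: showing that the six-edge interface cut is canonical forces every automorphism to preserve $V(H)$ and $V(GP(8,3))$, so $\mathrm{Aut}(\mathscr{G})$ embeds into the product of the setwise stabilisers of the two deleted triples inside $\mathrm{Aut}(H)\cong \mathrm{PGL}(2,7)$ and $\mathrm{Aut}(GP(8,3))$ -- both computable from the automorphisms of the Fano and M\"obius--Kantor configurations -- which yields order $144$. Uniqueness of $\mathscr{G}$ would then follow by showing that the hypotheses of Conjecture~\ref{cong}, together with the partial characterisation of \cite{ADJLS}, force exactly this amalgam.
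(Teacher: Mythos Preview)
Your amalgam idea matches the paper's in spirit, but the numerics are off in a way that matters. The paper does not delete a \emph{triple} of edges from each of $H$ and $GP(8,3)$; it deletes \emph{four} edges from each (the four incidences of a quadrilateral in $F$, and the four ``circumscription'' incidences in $MK$), leaving on either side four points and four lines of valency two, and then joins the two residues by an \emph{eight}-edge bridge rather than a six-edge one. The count $(21-4)+(24-4)+8=45$ still closes, but the interface is governed by a pair of permutations $\alpha,\beta\in S_4$ (not $S_3$), and it is this $S_4$-structure that produces the $D_4\times\mathbb{Z}_2$ factor of the automorphism group and the $576$ possible bridges analysed in the uniqueness section.

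More seriously, your plan for $\mathrm{Aut}(\mathscr{G})$ cannot work as stated. You want to show that the interface cut is canonical, so that every automorphism preserves the $H$-side and the $GP(8,3)$-side separately; but in the actual graph this is false. The paper exhibits a normal subgroup $K\cong\mathbb{Z}_3\times\mathbb{Z}_3$ of $\mathrm{Aut}(\mathscr{G})$ whose non-identity elements send the distinguished edge $e$ of the $F$-residue onto edges $m_i$ lying in the $MK$-residue (e.g.\ $\sigma_0=(e,m_0,m_2)(m_1,f_3,f_0)(m_3,f_2,f_1)$), so these automorphisms genuinely mix the two sides. It is precisely this unexpected $\mathbb{Z}_3\times\mathbb{Z}_3$ that accounts for the factor $9$ in $|\mathrm{Aut}(\mathscr{G})|=9\cdot 16=144$; the subgroup your embedding argument would detect is only the stabiliser $H\cong D_4\times\mathbb{Z}_2$ of order $16$. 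As an aside, the paper does not carry out your cycle-parity bookkeeping for the pseudo $2$-factor isomorphic property at all: it simply cites Goedgebeur's computational verification in \cite{JG} and concentrates on the construction, on $\mathrm{Aut}(\mathscr{G})\cong(\mathbb{Z}_3\times\mathbb{Z}_3)\rtimes(D_4\times\mathbb{Z}_2)$ (obtained with the aid of Magma), and on uniqueness among the $576$ possible eight-bridges.
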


In \cite{JG} Jan Goedgebeur computationally found a graph $\mathscr{G}$ on $30$ vertices which is pseudo $2$-factor isomorphic cubic and bipartite, essentially $4$-edge-connected and cyclically $6$-edge-connected, thus refuting the above Conjecture \ref{cong} (cf. Figure \ref{Disegno}). Here, we explain how $\mathscr{G}$ (which we will also refer to as \emph{Goedgebeur's graph}) is generated and why it is unique in its kind, providing a construction. In particular, we analyse the structure of $\mathscr{G}$ describing how it arises from the Heawood graph and the generalised Petersen graph $GP(8,3)$, which are the Levi graphs of the Fano $7_3$ configuration $F$ and the M\"obius-Kantor $8_3$ configuration $MK$, respectively. This construction will completely explain the automorphism group of $\mathscr{G}$ as a semidirect product of groups of order $9$ and $16$ respectively. This construction does not generalize in a natural way to an infinite family of graphs preserving all properties of $\mathscr{G}$, in particular being pseudo $2$-factor isomorphic.

Recall that a cubic bipartite graph on $2n$ vertices with girth at least $6$ is the Levi graphs of a symmetric configurations $n_3$.
When counting symmetric configurations, we must distinguish between self-dual configurations, which give rise to one bipartite graph of girth at least $6$ (via its Levi graph), and pairs of distinct configurations dual to each other with isomorphic Levi graphs. With increasing order $n$ the number of pairwise non-isomorphic structures exponentially increases but almost all of them are rigid, i.e. they have a trivial automorphism group. Therefore it makes sense to focus on graphs which have a \textit{large} automorphism group, say at least $4n$.

The graph $\mathscr{G}$ has $30=2n$ vertices and is the Levi graph of a $n_3=15_3$ configuration $\mathscr{C}$. For $n=15$, there are $125571$ cubic bipartite graphs with girth at least $6$, which give rise to $5802$ $15_3$ self-dual configurations and $119770$ pairs of non-isomorphic $15_3$ configurations dual to each other. Note that the automorphism group of the Levi graph of a configuration $n_3$ is twice as big as the one of the corresponding configuration. There are $6$ graphs with \textit{large} automorphism group, namely the Levi graph of the generalised quadrangle of order $2$, with automorphism group of order $2 \cdot 8094$, moreover there are two Levi graphs of self-dual configurations with automorphism group $2 \cdot 720$ and $2 \cdot 128$, respectively, a pair of non-isomorphic configurations with automorphism group of order $192$, as well as Goedgebeur's graph  $\mathscr{G}$, i.e. a self-dual graph with automorphism group of order $144$. All Levi graphs of other $15_3$ configurations have automorphism group of order less than $4n$ (cf. \cite{BPriv}, \cite{BBP}, \cite{JG}).

In Section \ref{const} we will describe the construction that gives rise to $\mathscr{G}$; in Section \ref{auto} we investigate the automorphism group of $\mathscr{G}$ and in Section \ref{unique} we analyse the uniqueness of $\mathscr{G}$.

%

\section{The Construction}\label{const}
In this section we describe how the configuration $\mathscr{C}$, of which $\mathscr{G}$ is the Levi graph, arises by appropriately joining the Fano configuration $F$ and the M\"obius-Kantor configuration $MK$.
The following definition will be very useful for our purposes:
\begin{defn}
A quadrilateral consists of $4$  points $P_0,P_1,P_2,P_3$ in general position (i.e. no three collinear) and the $4$ (oriented) lines $P_iP_{i+1}$, indices taken modulo $4$.
\end{defn}
Now we define two structures that arise from the configurations $MK$ and $F$ by removing some point/line incidences without deleting neither points nor lines, and changing only the valency of some of them.\\
Consider the classical representation of $MK$ as two quadrilaterals simultaneously inscribed and circumscribed (e.g. cf. \cite[pag. 430]{C}). Disregarding the circumscription, i.e. removing the corrisponding incidences it defines, we obtain an \emph{$MK$-residue} in which the valency of 4 points and 4 lines decreases from three to two. Similarly, removing the incidences of a quadrilateral in $F$, we obtain an \emph{$F$-residue} with 4 points and 4 lines of valency two.\\
The configuration $\mathscr{C}$ then arises by suitably adding incidences among points and lines of valency two between an $F$-residue and an $MK$-residue.

Consider the labellings of the M\"obius-Kantor and Fano plane configurations as in Figure \ref{MK-F} and choose the $MK$-residue and the $F$-residue accordingly.
\begin{figure}[h]
\begin{center}
\includegraphics[scale=0.15]{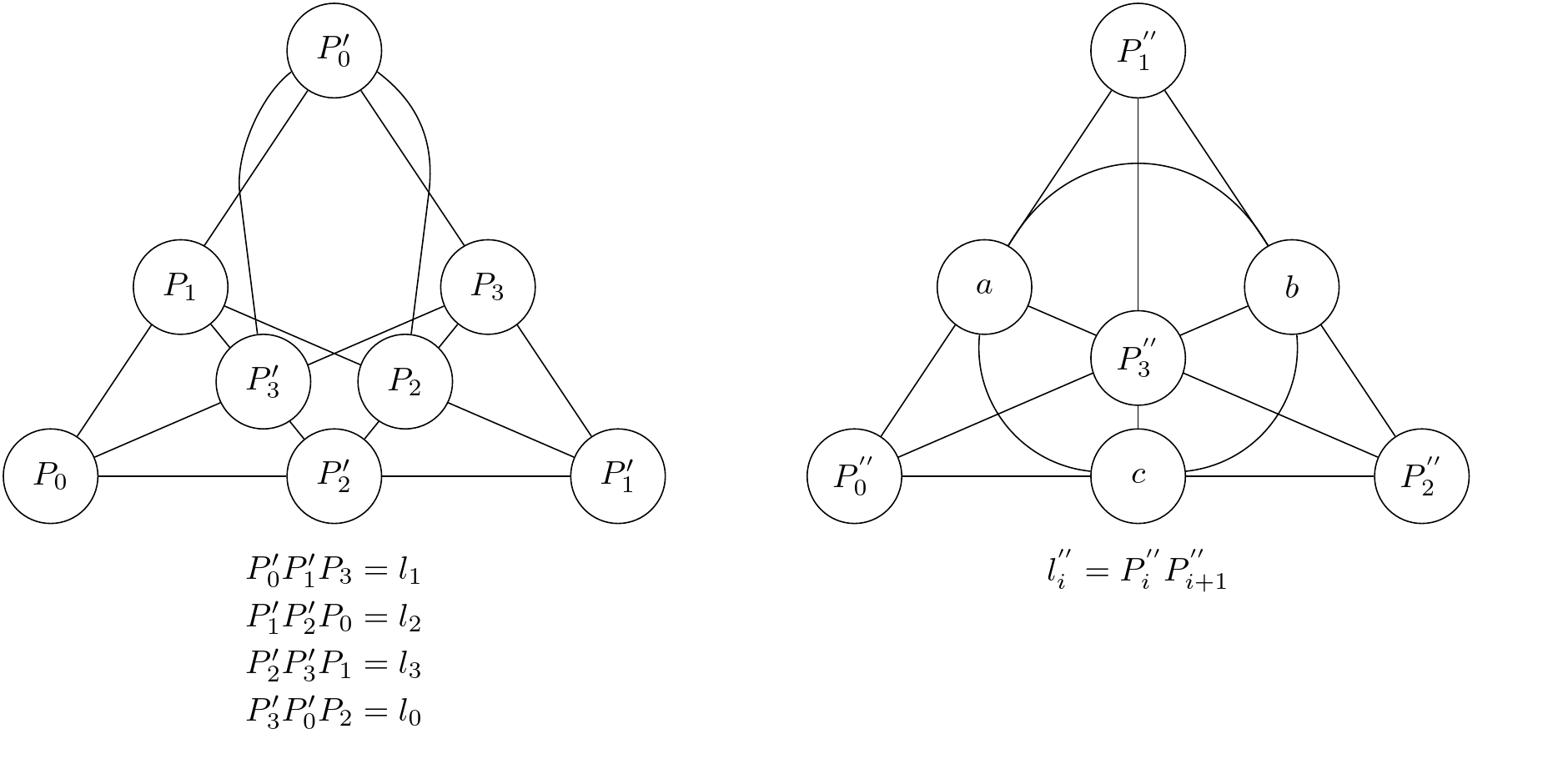}	
\end{center}
\caption{M\"obius-Kantor and Fano plane configurations}	
\label{MK-F}
\end{figure}

Note that the quadrilaterals $P_0P_1P_2P_3$ and $P'_0P'_1P'_2P'_3$ are both diagonal free, in the sense that the pairs $P_0P_2,P_1P_3,P'_0P'_2$ and $P'_1P'_3$ are all non collinear. These two quadrilaterals are in fact mutually inscribed and circumscribed. The $MK$-residue is obtained by removing the circumscription, i.e. the incidences $P_0 | l_2, P_1 | l_3, P_2 | l_0, P_3 | l_1$, where $l_i$ is defined as in Figure \ref{MK-F}. The $F$-residue is obtained by removing the incidences $P''_{i+1} | l''_i$ of the quadrilateral $P''_0,P''_1,P''_2,P''_3$, indices taken modulo $4$, where $l''_i$ is also defined as in Figure \ref{MK-F}.
There are $576$ ways to join the points and lines of the  $MK$-residue and the $F$-residue in order to obtain a $15_3$ configuration (cf. Section \ref{unique}), but the choice of the incidences $P_i | l''_i$ and $P''_i | l_i$, indices taken modulo $4$, gives rise to $\mathscr{C}$ and has $\mathscr{G}$ as its Levi Graph (cf. Figure \ref{Disegno}). In Section \ref{unique} we will analyse what the other choices of incidences give, but first we study $Aut(\mathscr{G})$.

%
\begin{figure}[h]
\begin{center}
\includegraphics[scale=0.15]{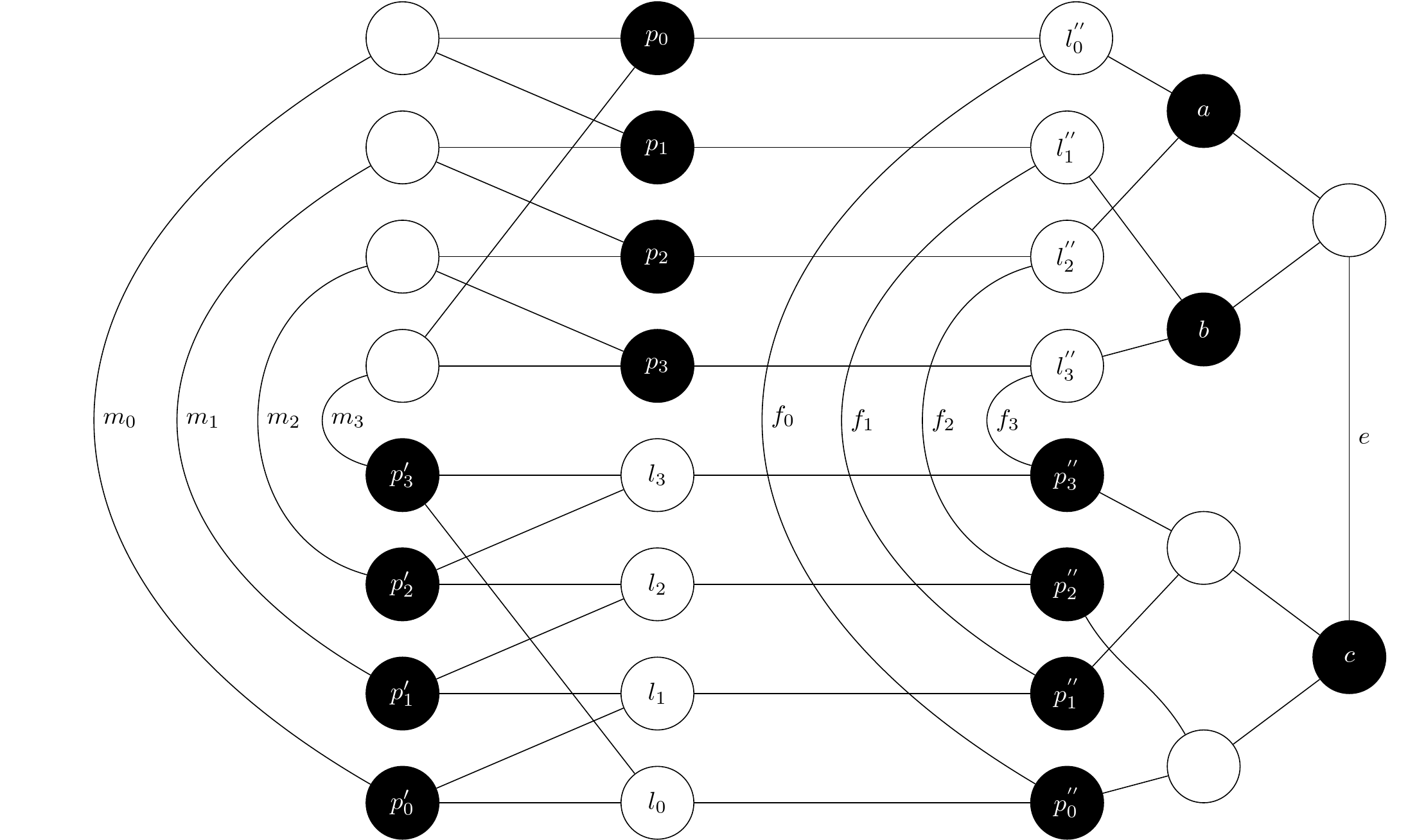}	
\end{center}
\caption{Goedgebeur's graph $\mathscr{G}$}	
\label{Disegno}
\end{figure}
\section{The Automorphism Group of $\mathscr{G}$}\label{auto}
In order to investigate the automorphism group $Aut(\mathscr{G})$, we consider the Levi graph of the $F$-residue and the $MK$-residue described in the previous section and mantain the inheritatd labelling on the vertices, some corresponding to points and other to lines, on different sides of the bipartition.
We refer to the edges connecting the $F$-residue with the $MK$-residue as the \textit{eight-bridge}.
Observe that in $\mathscr{G}$ there is a unique edge $e$ at distance $2$ from each vertex $P''_i, l''_i, i=0, \ldots, 3$ and hence from the eight-bridge. Morover, the edges $f_i=P''_i l''_i, \quad i=0, \ldots, 3$ are independent. Also, in the Levi graph of the MK-residue part of $\mathscr{G}$, there are exactly four independent edges  $m_0, \ldots, m_3$, at distance two from the eight-bridge (cf. Figure \ref{Disegno}).



\begin{thm}
$Aut(\mathscr{G}) \cong (\mathbb{Z}_3 \times \mathbb{Z}_3) \rtimes (D_4 \times \mathbb{Z}_2)$
\end{thm}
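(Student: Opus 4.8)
The plan is to exploit the decomposition of $\mathscr{G}$ built in the construction above: $\mathscr{G}=L_F\cup B\cup L_{MK}$, where $L_F$ and $L_{MK}$ are the Levi graphs of the $F$-residue (on $14$ vertices) and of the $MK$-residue (on $16$ vertices) and $B$ is the eight-bridge. The argument has three parts: (i) show that $B$ is preserved by every $\varphi\in Aut(\mathscr{G})$, so that each automorphism restricts to automorphisms of $L_F$ and of $L_{MK}$ agreeing on $B$; (ii) determine $Aut(L_F)$ and the action of $Aut(L_{MK})$ on its boundary; (iii) assemble the result as a split extension. The only genuinely delicate point will be part (i).

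\textbf{Invariance of the eight-bridge.} Let $e$ be the edge singled out before the statement: the unique edge of $\mathscr{G}$ at distance $2$ from each of $P''_0,\dots,P''_3,l''_0,\dots,l''_3$, hence from $B$. I would first characterise $e$ by an $Aut(\mathscr{G})$-invariant local quantity. A short count shows that exactly four girth cycles ($6$-cycles) of $\mathscr{G}$ pass through $e$, and I would then check — examining the $6$-cycles of $\mathscr{G}$ edge by edge, organised along the decomposition — that $e$ is the only edge with this property (equivalently, one compares, for each edge, the isomorphism type of its $2$-ball together with the number of edges leaving that ball). Once $e$ is known to be canonical, so is its $2$-ball $B_2(e)$; a direct inspection gives that $B_2(e)$ has exactly $14$ vertices and coincides with $L_F$, that its sphere of radius $2$ is precisely $\{P''_0,\dots,P''_3,l''_0,\dots,l''_3\}$, that the eight edges leaving $B_2(e)$ are exactly those of $B$, and that the complementary $16$ vertices induce $L_{MK}$. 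Hence restriction defines an injection $Aut(\mathscr{G})\hookrightarrow Aut(L_F)\times Aut(L_{MK})$ whose image is the set of pairs $(\varphi_F,\varphi_{MK})$ that induce the same permutation of the eight edges of $B$.

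\textbf{The two residues.} One checks that $L_F$ has exactly two degree-$3$ vertices all of whose neighbours have degree $3$, namely the two ends $u,v$ of $e$, and that $L_F$ is obtained from the edge $e=uv$ by attaching to each of $u,v$ a binary tree of depth $2$ — one with leaves $l''_0,\dots,l''_3$ grouped as $\{l''_0,l''_2\},\{l''_1,l''_3\}$, the other with leaves $P''_0,\dots,P''_3$ grouped as $\{P''_0,P''_2\},\{P''_1,P''_3\}$ — and then inserting the four independent edges $f_i=P''_il''_i$. Therefore an automorphism of $L_F$ is prescribed by a permutation $\pi$ of $\{0,1,2,3\}$ preserving the partition $\{\{0,2\},\{1,3\}\}$, i.e. $\pi$ in the dihedral group $D_4$ of the $4$-cycle $0{-}1{-}2{-}3$, together with an optional interchange of the two trees (a self-polarity of the $F$-residue) commuting with $\pi$; all $16$ of these occur, so $Aut(L_F)\cong D_4\times\mathbb{Z}_2$. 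Moreover an automorphism of $L_F$ fixing all of $P''_0,\dots,P''_3,l''_0,\dots,l''_3$ successively fixes their common neighbours and so is the identity, whence $Aut(L_F)$ acts \emph{faithfully} on the eight boundary vertices, realising exactly the $16$ ``quadrilateral-and-duality'' permutations. For $L_{MK}$, which is $GP(8,3)$ with a $4$-edge matching deleted, I would verify by a finite computation on $16$ vertices that the subgroup $K$ of automorphisms fixing $P_0,\dots,P_3,l_0,\dots,l_3$ pointwise is isomorphic to $\mathbb{Z}_3\times\mathbb{Z}_3$ (it acts on the eight interior vertices of $L_{MK}$ with orbit sizes $3,3,1,1$, and, since $9\nmid 96=|Aut(GP(8,3))|$, it is created only by deleting the circumscription), and that — using that the gluing incidences $P_i\,|\,l''_i$ and $P''_i\,|\,l_i$ are symmetric under the quadrilateral's $D_4$ and under the self-duality of $\mathscr{C}$ — every one of the $16$ boundary permutations above extends to an automorphism of $L_{MK}$.

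\textbf{Assembly.} Composing the injection of part (i) with the projection onto $Aut(L_F)$ yields a homomorphism $p\colon Aut(\mathscr{G})\to Aut(L_F)\cong D_4\times\mathbb{Z}_2$; it is onto by the last claim of part (ii), and by faithfulness of $Aut(L_F)$ on the boundary its kernel is $\{(\mathrm{id},\varphi_{MK}):\varphi_{MK}\in K\}\cong\mathbb{Z}_3\times\mathbb{Z}_3$. Hence $|Aut(\mathscr{G})|=9\cdot16=144$, and there is a short exact sequence $1\to\mathbb{Z}_3\times\mathbb{Z}_3\to Aut(\mathscr{G})\xrightarrow{\,p\,}D_4\times\mathbb{Z}_2\to1$ with abelian kernel. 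It splits: the collineations of $\mathscr{C}$ induced by the symmetries of the quadrilateral, together with a polarity $\delta$ of the self-dual configuration $\mathscr{C}$ that can be chosen to commute with them, form $16$ automorphisms of $\mathscr{G}$ generating a subgroup isomorphic to $D_4\times\mathbb{Z}_2$ and meeting $K$ trivially (any nontrivial one of them moves the boundary, while $K$ fixes it), hence a complement. Therefore $Aut(\mathscr{G})\cong(\mathbb{Z}_3\times\mathbb{Z}_3)\rtimes(D_4\times\mathbb{Z}_2)$; the conjugation action $D_4\times\mathbb{Z}_2\to\mathrm{GL}(2,3)$ on the kernel, which is nontrivial — e.g. $\delta$, or a reflection of the quadrilateral, does not centralise $K$ — can be recorded along the way. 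As noted, the single delicate ingredient is the invariance of the eight-bridge; once that is secured, everything else is an explicit symmetry analysis of the small graphs $L_F$ and $L_{MK}$ together with routine handling of the resulting group extension.
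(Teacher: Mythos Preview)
Your argument rests on the claim that the eight-bridge $B$ (equivalently, the edge $e$) is invariant under every automorphism of $\mathscr{G}$, and this is false. In the paper's proof the normal subgroup $K\cong\mathbb{Z}_3\times\mathbb{Z}_3$ is generated by automorphisms $\sigma_0,\sigma_1$ whose action on the nine edges $M=\{e,f_0,\dots,f_3,m_0,\dots,m_3\}$ is \emph{transitive}: for instance $\sigma_0$ sends $e$ to $m_0$ and $\sigma_1$ sends $e$ to $f_1$. Thus $e$ lies in an $Aut(\mathscr{G})$-orbit of size $9$, so it certainly cannot be singled out by any invariant such as the number of $6$-cycles through it; your ``short count'' showing $e$ is the unique edge contained in four girth cycles would, if carried out, fail --- all nine edges of $M$ share whatever local invariants $e$ has. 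Consequently the decomposition $L_F\cup B\cup L_{MK}$ is \emph{not} preserved by $Aut(\mathscr{G})$, there is no restriction homomorphism $Aut(\mathscr{G})\to Aut(L_F)\times Aut(L_{MK})$, and the nine ``boundary-fixing'' automorphisms of $L_{MK}$ that you posit do not exist (the subgroup of $Aut(\mathscr{G})$ fixing $L_F$ pointwise is trivial, else $|Aut(\mathscr{G})|$ would exceed $144$).

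What does survive is your analysis of the stabiliser of $e$: the setwise stabiliser of the eight-bridge coincides with $\mathrm{Stab}(e)$, it restricts faithfully to $Aut(L_F)\cong D_4\times\mathbb{Z}_2$, and this is exactly the complement $H$ in the paper's decomposition. The missing piece --- and the genuinely non-obvious part of the structure --- is that the $\mathbb{Z}_3\times\mathbb{Z}_3$ factor does \emph{not} live inside the stabiliser of the bridge but rather acts regularly on the orbit $M$ of $e$, mixing the $F$-residue and the $MK$-residue. The paper obtains these automorphisms by computer; a by-hand proof would need to exhibit $\sigma_0,\sigma_1$ explicitly on all $30$ vertices and verify directly that $\langle\sigma_0,\sigma_1\rangle$ is normal of order $9$, rather than trying to read them off the $L_F/L_{MK}$ decomposition.
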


\begin{proof}
 Using Magma Computational Algebra System \cite{BCP}, we computed $Aut(\mathscr{G})$ and realized that it can be described in terms of its action on the set of edges $M:=\left\{e,f_0,\dots,f_3,m_0,\dots,m_3\right\}$, so abusing notation, we will denote automorphisms of $\mathscr{G}$ as permutations of the edges in $M$.

 There are 8 automorphisms characterised by the fact that they map the edge $e$ onto one edge out of $\left\{f_0,\dots,f_3,m_0,\dots,m_3\right\}$. Together with the identity, these automorphisms are a subgroup $K$ of $Aut(\mathscr{G})$ generated by automorphisms $\sigma_0$ and $\sigma_1$:

\begin{center}
\begin{tabular}{c}
$\sigma_0=(e,m_0,m_2)(m_1,f_3,f_0)(m_3,f_2,f_1)$ \\
$ \sigma_1=(e,f_1,f_3)(f_0,m_0,m_3)(f_2,m_1,m_2)$ \\
\end{tabular}
\end{center}

 These automorphisms have order 3, and $K$, which is not cyclic, is hence isomorphic to $\mathbb{Z}_3 \times \mathbb{Z}_3$. Moreover, $K \triangleleft Aut(\mathscr{G})$.

 On the other hand, the stabilizers in $Aut(\mathscr{G})$ of each edge in $M$ represent isomorphic non-abelian subgroups of order 16, none of which is normal in $Aut(\mathscr{G})$. Besides the identity, in each group there are 11 and 4 elements of order 2 and 4 respectively. Using the well-known classification of groups (cf. \cite{Wild}), these are isomorphic to $D_4 \times \mathbb{Z}_2$, and they form a unique class of conjugacy in $Aut(\mathscr{G})$. Hence, we may choose the stabilizer of the edge $e$, say $H$. In $H$, we find a transpostion $\tau$ which inverts the endvertices of the edges in $M$, acting as a horizontal mirror through the center of Figure \ref{Disegno}. Moreover there are two automorphisms:

\begin{center}
\begin{tabular}{c}
$  \delta=(e)(f_0,f_2)(f_1)(f_3)(m_0,m_1)(m_2,m_3)$ \\
$ \rho=(e)(f_0,f_1,f_2,f_3)(m_0,m_1,m_2,m_3)$ \\
\end{tabular}
\end{center}

such that $<\delta, \rho> = D_4$.

In conclusion we have that
$$Aut(\mathscr{G}) \cong K \rtimes H \cong (\mathbb{Z}_3 \times \mathbb{Z}_3) \rtimes (D_4 \times \mathbb{Z}_2).$$
\end{proof}

\section{The uniqueness of $\mathscr{G}$}\label{unique}

In the construction of $\mathscr{G}$, we made a very specific choice of the eight-bridge. We now analyse what the other choices are, and to some extent, what graphs they give rise to. On the first place, we observe that not every eight-bridge preserves bipartition and girth $6$ and that those that do, correspond to two permutations of four elements, i.e. from the symmetric group $S_4$, say $\alpha$ and $\beta$ which are used to establish the edges $P_i | l''_{\alpha(i)}$ and $l_i|P''_{\beta(i)}$. There are $24$ choices for each of $\alpha$ and $\beta$ for a total of $576$ choices for the eight-bridge (preserving bipartition and girth). Let $\mathscr{G}_{\alpha,\beta}$ be the graph with the choice of the eight-bridge according to $\alpha$ and $\beta$. To describe and analyse the choices we consider a partition of the symmetric group $S_4$ as in Table \ref{tabS4} in which the first row contains the normal Klein subgroup $\mathbb{Z}_2 \times \mathbb{Z}_2$, and the other rows its laterals. In the last column of Table \ref{tabS4}, we have represented the effect of, say $\alpha$, on the other adjacencies between $l''_i$ and the vertices $\{a,b\}$ (as in Figure \ref{Disegno}).

\begin{table}[h!]
\begin{center}
\begin{tabular}{||l||c|c|c|c|l|}
\hline
\textbf{Class \RNum{1}}   & $id$       & $(02)$   & $(13)$   & $(02)(13)$ & \begin{minipage}{1.9cm}
      \includegraphics[width=2cm, height= 1.8cm]{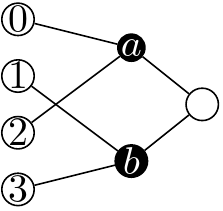}
    \end{minipage} \\ \hline
\textbf{Class \RNum{2}}  & $(01)(23)$ & $(0123)$ & $(0321)$ & $(03)(12)$ & \begin{minipage}{1.9cm}
      \includegraphics[width=2cm, height= 1.8cm]{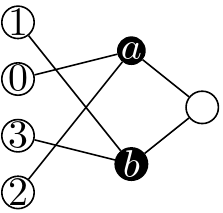}
    \end{minipage} \\ \hline
\textbf{Class \RNum{3}} & $(01)$     & $(012)$  & $(031)$  & $(0312)$ & \begin{minipage}{1.9cm}
      \includegraphics[width=2cm, height= 1.8cm]{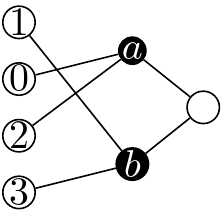}
    \end{minipage} \\ \hline
\textbf{Class \RNum{4}}  & $(23)$     & $(023)$  & $(132)$  & $(0213)$   & \begin{minipage}{1.9cm}
      \includegraphics[width=2cm, height= 1.8cm]{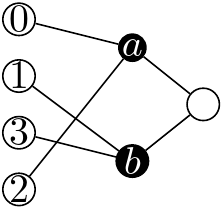}
    \end{minipage} \\ \hline
\textbf{Class \RNum{5}}   & $(03)$     & $(032)$  & $(013)$  & $(0132)$  & \begin{minipage}{1.9cm}
      \includegraphics[width=2cm, height= 1.8cm]{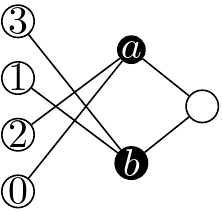}
    \end{minipage} \\ \hline
\textbf{Class \RNum{6}}  & $(12)$     & $(021)$  & $(123)$  & $(0231)$   & \begin{minipage}{1.9cm}
      \includegraphics[width=2cm, height= 1.8cm]{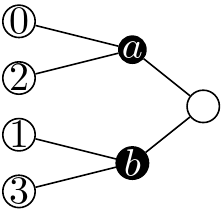}
    \end{minipage} \\ \hline
\end{tabular}
\end{center}
\caption{The partition of $S_4$}\label{tabS4}
\end{table}

\pagebreak

We distinguish among two types of choices of $\alpha$ and $\beta$.

\begin{itemize}
\item \underline{\textbf{Case 1}}: When $\alpha=\beta$
\item \underline{\textbf{Case 2}}: When $\alpha \ne \beta$.
\end{itemize}

In case 1, $\tau$, the mirror automorphism of $\mathscr{G}$ described in the previous section, is still an automorphism of $\mathscr{G}_{\alpha,\beta}$. The choice of $\alpha$ from the first two rows of Table \ref{tabS4} gives rise to graphs isomorphic to $\mathscr{G}$ because the rest of the $F$-residue is preserved. Thus, we have $8$ graphs isomorphic to $\mathscr{G}$. The other $16$ graphs with $\alpha$ from rows $3-6$ of Table \ref{tabS4} do no longer preserve the automorphisms $\sigma_0$ and $\sigma_1$, but they all belong to the same isomorphism class. Such graphs have automorphism group of order $24$, having lost the $\mathbb{Z}_3 \times \mathbb{Z}_3$ part of the automorphism group of $\mathscr{G}$.

In case 2 the situation of symmetries gets much worse, and the results are summarized in Table \ref{tabOther}.

\begin{table}[h!]
\begin{center}
\begin{tabular}{|c|c|c|}
\hline
\textbf{$|Aut(G)|$} & \textbf{$\#$ Isomorphism Classes} & \textbf{$\#$ Representants} \\ \hline
16                  & 1                                 & 8                           \\ \hline
8                   & 6                                 & 16                          \\ \hline
\multirow{2}{*}{4}  & 1                                 & 64                          \\ \cline{2-3}
                    & 4                                 & 32                          \\ \hline
2                   & 2                                 & 64                          \\ \hline
1                   & 1                                 & 128                         \\ \hline
\end{tabular}
\end{center}
\caption{Summary of $\mathscr{G}_{\alpha,\beta}$}\label{tabOther}
\end{table}

In conclusion, the graph $\mathscr{G}$ is unique, up to automorphisms of the $F$-residue, which as described in case 1.

There is also another sense in which $\mathscr{G}$ seems to be unique. As far as we could check, the construction of joining residues of Levi graphs of $n_3$ configurations, do not preserve, in general a strong property such as being pseudo $2$-factor isomorphic. The problem lies in the kinds of cycles which are created that are, thus, able to produce $2$-factors of both parities. Intuitively, there is too much room, whereas the MK-residue and the F-residue are very compact. Moreover, also using several copies of the same residues does not preserve the behavior of the parity of cycles in a $2$--factor.

\section*{Acknowledgement}

The research that led to the present note was partially supported by the group \\ GNSAGA of INdAM.

\end{document}